\newtheorem{thm}{Theorem}
\newtheorem{lem}{Lemma}
\newtheorem{ob}{Observation}
\newtheorem{claim}{Claim}
\newenvironment {proof} {\noindent{\em Proof.}}{\hspace*{\fill}$\Box$\par\vspace{4mm}}
\def\qed{\hfill \nopagebreak\rule{5pt}{8pt}}
\title{\bf Degree powers in $C_5$-free graphs\footnote{Supported by NSFC and the ``973" program. } }
\author{
\small  Ran Gu, Xueliang Li, Yongtang Shi\\
\small Center for Combinatorics and LPMC-TJKLC \\
\small Nankai University, Tianjin 300071, China \\
\small guran323@163.com, lxl@nankai.edu.cn,  shi@nankai.edu.cn
\date{}}
\begin{document}
\maketitle
\begin{abstract}
Let $G$ be a graph with degree sequence $d_1,d_2,\ldots,d_n$. Given
a positive integer $p$, denote by $e_p(G)=\sum_{i=1}^n d_i^p$. Caro
and Yuster introduced a Tur\'an-type problem for $e_p(G)$: given an
integer $p$, how large can $e_p(G)$ be if $G$ has no subgraph of a
particular type. They got some results for the subgraph of
particular type to be a clique of order $r+1$ and a cycle of even
length, respectively. Denote by $ex_p(n,H)$ the maximum value of
$e_p(G)$ taken over all graphs with $n$ vertices that do not contain
$H$ as a subgraph. Clearly, $ex_1(n,H)=2ex(n,H)$, where $ex(n,H)$
denotes the classical Tur\'an number. In this paper, we consider
$ex_p(n, C_5)$ and prove that for any positive integer $p$ and
sufficiently large $n$, there exists a constant $c=c(p)$ such that
the following holds: if $ex_p(n, C_5)=e_p(G)$ for some $C_5$-free
graph $G$ of order $n$, then $G$ is a complete bipartite graph
having one vertex class of size $cn+o(n)$ and the other
$(1-c)n+o(n)$.
\\[2mm]
\textbf{Keywords:} degree power; Tur\'an-type problem; $H$-free\\
[2mm] \textbf{AMS Subject Classification (2010):} 05C35, 05C07.
\end{abstract}

\section{Introduction}

All graphs considered here are finite, undirected, and have no loops
or multiple edges. For standard graph-theoretic notation and
terminology, the reader is referred to \cite{B}. Denote by $ex(n,H)$
the classical Tur\'{a}n number, i.e., the maximum number of edges
among all graphs with $n$ vertices that do not contain $H$ as a
subgraph. Denote by $T_r(n)$ the $r$-partite Tur\'{a}n graph of
order $n$, namely, $ex(n,K_{r+1})=e(T_r(n))$. Given a graph $G$
whose degree sequence is $d_1,\ldots,d_n$, and for a positive
integer $p$, let $e_p(G)=\sum\limits_{i{\rm{ = }}1}^n {{d_i}^p}$.
Caro and Yuster \cite{CY} introduced a Tur\'an-type problem for
$e_p(G)$: given an integer, how large can $e_p(G)$ be if $G$ has no
subgraph of a particular type. Denote by $ex_p(n,H)$ the maximum
value of $e_p(G)$ taken over all graphs with $n$ vertices that do
not contain $H$ as a subgraph. Clearly, $ex_1(n,H)=2ex(n,H)$. It is
interesting to determine the value of $ex_p(n,H)$ and the
corresponding extremal graphs. In \cite{CY}, Caro and  Yuster
considered $K_{r+1}$-free graphs and proved that
\begin{equation}\label{eq1}
    e{x_p}(n,{K_{r + 1}}){\rm{ }} = {\rm{ }}{e_p}({T_r}(n)){\rm{ }}
\end{equation}
for $1 \leq p \leq 3$.

Therefore, it is interesting to find the values of $p$ for which
equality \eqref{eq1} holds and determine the asymptotic value of
$ex_p(n,K_{r+1})$ for large $n$. In \cite{BN}, Bollob\'as and
Nikiforov showed that for every real $p$ ($1\leq p< r$) and
sufficiently large $n$, if $G$ is a graph of order $n$ and has no
clique of order $r+1$, then $ex_p(n,K_{r+1})=e_p(T_r(n))$, and for
every $p\geq r+\lceil\sqrt{2r}\rceil$ and sufficiently large $n$,
$ex_p(n,K_{r+1})>(1+\epsilon)e_p(T_r(n))$ for some positive
$\epsilon=\epsilon(r)$. In \cite{BN1}, Bollob\'as and Nikiforov
proved that if $e_p(G)>(1-1/r)^pn^{p+1}+C$, then $G$ contains more
than $\frac{Cn^{r-p}}{p2^{6r(r+1)+1}r^r}$ cliques of order $r+1$.
Using this statement, they strengthened the Erd\"os--Stone theorem
by using $e_p(G)$ instead of the number of edges.

When considering cycles as the forbidden subgraphs, Caro and Yuster
\cite{CY} determined the value of $e{x_2}(n,{C^ * })$ for
sufficiently large $n$, where $C^ *$ denotes the family of cycles
with even length. And they also characterized the unique extremal
graphs. In \cite{N}, Nikiforov proved that for any graph $G$ with
$n$ vertices, if $G$ does not contain $C_{2k+2}$, then for every
$p\geq 2$, ${e_p}(G) \leq kn^p+O(n^{p-1/2})$.  Since the graph
${K_k}{\rm{ + }}{\overline K _{n{\rm{ - }}k}}$, i.e., the join of
${K_k}$ and $ {\overline K _{n{\rm{ - }}k}}$ contains no $C_{2k+2}$,
that gives $ e{x_p}(n,{C_{2k{\rm{ + }}2}})$, hence $
e{x_p}(n,{C_{2k{\rm{ + }}2}})=kn^p(1+o(1))$, which settles a
conjecture of Caro and Yuster.

In this paper, we will study $ex_p(n, C_5)$. For a fixed
$(r+1)$-chromatic graph $H$, Bollob\'as and Nikiforov \cite{BN1}
showed that for every $r\geq 2$ and $p>0$,
$ex_p(n,H)=e_p(n,K_{r+1})+o(n^{p+1})$. This gives us that $ex_p(n,
C_5)=e_p(n,K_{3})+o(n^{p+1})$. Our main result is the following
theorem.
\begin{thm}\label{thm}
For any positive integer $p$ and sufficiently large $n$, there
exists a constant $c=c(p)$ such that the following holds: if
$ex_p(n, C_5)=e_p(G)$ for some $C_5$-free graph $G$ of order $n$,
then $G$ is a complete bipartite graph having one vertex class of
size $cn+o(n)$ and the other of size $(1-c)n+o(n)$.
\end{thm}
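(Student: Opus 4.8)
The plan is to determine the extremal value asymptotically, then establish a stability statement forcing a near-complete-bipartite structure, and finally use the $C_5$-free condition to rigidify this into an exact complete bipartite graph.

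First I would record the relevant optimization. For $a+b=n$ the complete bipartite graph $K_{a,b}$ is $C_5$-free (it contains no odd cycle) and satisfies $e_p(K_{a,b})=a\,b^p+b\,a^p=n^{p+1}f(a/n)$, where $f(t)=t(1-t)^p+(1-t)t^p$. Let $c=c(p)\in(0,1/2]$ be a maximizer of $f$ on $[0,1]$; a direct check gives $c=1/2$ for $p\le 3$ and $c<1/2$ for $p\ge 4$ (for large $p$ one finds $c\approx 1/(p+1)$). This already yields the lower bound $ex_p(n,C_5)\ge f(c)n^{p+1}$. For the matching upper bound I would invoke the quoted estimate $ex_p(n,C_5)=ex_p(n,K_3)+o(n^{p+1})$ together with the fact that the triangle-free degree-power extremum is itself asymptotically attained by a complete bipartite graph (a dense triangle-free graph is essentially bipartite), so that $ex_p(n,C_5)=f(c)n^{p+1}+o(n^{p+1})$. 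Consequently any $C_5$-free graph $G$ with $e_p(G)=ex_p(n,C_5)$ satisfies $e_p(G)=f(c)n^{p+1}+o(n^{p+1})$.

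Second, I would prove stability: such an extremal $G$ is $o(n^2)$-close, in edit distance, to a complete bipartite graph. Concretely, I would produce a partition $V(G)=A\cup B\cup W$ with $|W|=o(n)$, $|A|=cn+o(n)$ and $|B|=(1-c)n+o(n)$, in which every \emph{good} vertex of $A$ is adjacent to all but $o(n)$ vertices of $B$ and to at most $o(n)$ vertices of $A$, and symmetrically for $B$, while the \emph{exceptional} set $W$ gathers the few vertices of low degree or anomalous adjacency and carries a negligible fraction of $e_p(G)$. This is the Erd\H{o}s--Simonovits stability philosophy applied to the functional $e_p$: a graph this close to the extremal value must have almost all of its degree-power mass on vertices of degree $\Theta(n)$, and the $C_5$-free condition forces the mutual adjacencies of these vertices to be almost perfectly bipartite.

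Third --- and here the pentagon is used sharply --- I would upgrade stability to an exact description, exploiting two facts: adding an edge $uv$ to a $C_5$-free graph creates a $C_5$ precisely when $u$ and $v$ are already joined by a path of length $4$, and in a bipartite graph every path of length $4$ joins two vertices of the \emph{same} side. I would first show that no edge lies inside a part among good vertices: if $x,y\in A$ were adjacent, then, using that almost all cross-adjacencies are present, I could pick $b_1\in B\cap N(x)$ and $b_2\in B\cap N(y)$ with $b_1\ne b_2$ and a vertex $a_1\in A\setminus\{x,y\}$ adjacent to both $b_1$ and $b_2$, producing the pentagon $x\,b_1\,a_1\,b_2\,y\,x$, a contradiction. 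The same configuration shows that no $w\in W$ can have a neighbour in $A$ and a neighbour in $B$ simultaneously, so each exceptional vertex sends its $A\cup B$-edges to a single side and may be reassigned accordingly; the remaining few $W$--$W$ edges and any residual misfit are removed by replacing each low-contribution vertex with a clone of a good vertex (completely joined to one side, independent from it), a move that preserves $C_5$-freeness and strictly increases $e_p$ unless the vertex already has this form. After this cleanup $G$ has no edge inside a part, hence is bipartite; and a bipartite graph that is not complete bipartite is not extremal, since adding any missing cross edge keeps it bipartite (thus $C_5$-free) while strictly increasing $e_p$. Therefore $G$ is complete bipartite, and maximizing $e_p(K_{a,n-a})=n^{p+1}f(a/n)$ over $a$ pins the class sizes at $cn+o(n)$ and $(1-c)n+o(n)$.

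I expect the main obstacle to be the interface between the stability step and the exact cleanup, in particular the elimination of the exceptional set $W$ and of vertices adjacent to both sides. Unlike the triangle-free extremal problems, $C_5$-freeness tolerates triangles, so there is no global parity obstruction to lean on; every potential deviation must be excluded either by exhibiting an explicit pentagon or by a degree-power-increasing local exchange, and one must check that these exchanges can be carried out simultaneously without reintroducing a $C_5$.
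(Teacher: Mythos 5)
Your architecture (asymptotic value, then stability, then exact cleanup) is genuinely different from the paper's, which proves no stability statement at all: it fixes a maximum-degree vertex $u$, uses $C_5$-freeness to pin down the structure of $G[\{u\}\cup N_G(u)]$ as an attachment of small graphs, and defeats every competing configuration by explicit comparison with two candidate graphs $G'$ and $G^\ast$. The problem is that your central step is a promise, not a proof. The stability assertion you need --- that a $C_5$-free graph $G$ with $e_p(G)=f(c)n^{p+1}+o(n^{p+1})$ admits a partition $A\cup B\cup W$ with the good/exceptional structure you describe --- is not an off-the-shelf consequence of Erd\H{o}s--Simonovits stability. That machinery concerns edge counts near $n^2/4$, whereas for $p\ge 4$ the optimizer is an \emph{unbalanced} bipartition with $c(1-c)n^2<n^2/4$ edges, so ``a dense triangle-free graph is essentially bipartite'' is not the relevant fact; moreover $C_5$-free graphs may contain triangles, so no parity argument transfers. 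You cite no $e_p$-stability theorem (none is quoted in the paper either), and proving one is essentially the entire content of the paper's Case 2 analysis (its Observations 1--2 and Claims 1--2). As written, the proposal reduces the theorem to an unproven assertion of comparable difficulty. (Your first step is fine in substance, but the correct justification is the one-line argument that in a triangle-free graph a maximum-degree vertex has an independent neighborhood, giving $e_p\le \Delta(n-\Delta)^p+(n-\Delta)\Delta^p\le f(c)n^{p+1}$, combined with the quoted Bollob\'as--Nikiforov reduction from $C_5$ to $K_3$; edge-stability is neither needed nor applicable.)

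There are also concrete flaws in the cleanup step. Cloning does not in general preserve $C_5$-freeness: if a vertex $x$ lies in a triangle $xn_3n_4$ and has a further neighbour $n_1$, adding a false twin $v$ of $x$ creates the pentagon $v\,n_1\,x\,n_3\,n_4\,v$; similarly, joining a new vertex $v$ to all of $B$ creates a pentagon $v\,b_2\,b_1\,a'\,b_3\,v$ as soon as $B$ still contains an internal edge $b_1b_2$ --- and before cleanup it may. So each exchange is legitimate only after the bipartite structure it is supposed to establish is already in place; you flag this circularity as an ``obstacle'' but do not resolve it. More fundamentally, the decisive comparisons in this problem occur at order $n^p$, not $n^{p+1}$: the paper's rival configurations $G'$ and $G^\ast$ (the latter containing a triangle at $u$ and two special vertices $w_1,w_2$ in $B$) match the complete bipartite graph in the coefficient of $n^{p+1}$ and are beaten only in the coefficient of $n^p$. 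Hence an $o(n^{p+1})$-level stability statement plus exchanges that ``strictly increase $e_p$'' cannot finish the job unless every exchange is verified to gain at the $\Theta(n^p)$ scale after accounting for the degree losses it inflicts on the deleted vertex's old neighbours; your sketch never engages with this second-order bookkeeping, which is exactly where the paper's explicit calculations live.
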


\section{Proof of Theorem \ref{thm} }
When $p=1$, it is  a well-known result of the classical Tur\'{a}n
problem. So in the following we assume $p \ge 2$. Throughout the
paper, let $G$ be the extremal graph satisfying that $ex_p(n,
C_5)=e_p(G)$. Observe that $T_2(n)$ contains no $C_5$, and we have
$$e_p(T_2(n))=\left\lfloor {\frac{n}{{\rm{2}}}} \right\rfloor
{\left( {\left\lceil {\frac{n}{2}} \right\rceil } \right)^p}{\rm{ +
}}\left\lceil {\frac{n}{2}} \right\rceil {\left( {\left\lfloor
{\frac{n}{2}} \right\rfloor } \right)^p}{\rm{ = }}{\left(
{\frac{1}{2}} \right)^p}{n^{p{\rm{ + }}1}}{\rm{ + }}o\left(
{{n^{p{\rm{ + }}1}}} \right).$$ By the definition of $ex_p(n, C_5)$,
we have $e_p(G) \geq e_p(T_2(n))$. Hence, the coefficient of
$n^{p+1}$ in $e_p(G)$ must be at least ${\left( {\frac{1}{2}}
\right)^p}$.

\begin{lem}\label{lem1}
For every integer $p$ and sufficiently large $n$, if $e_p(G)=ex_p(n,
C_5)$, then $\Delta(G)=an+o(n)$, where the constant $a=a(p)\geq
{\frac{1}{2}}$.
\end{lem}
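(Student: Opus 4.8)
The plan is to trap $e_p(G)$ between a lower bound forced by the extremality of $G$ and an upper bound written in terms of $\Delta(G)$ and $e(G)$, and then to read off the order of magnitude of $\Delta(G)$. For the lower bound, since $T_2(n)$ is $C_5$-free and $G$ is extremal, the computation preceding the lemma gives $e_p(G)\ge e_p(T_2(n))=2^{-p}n^{p+1}+o(n^{p+1})$. For the upper bound I would use that $p\ge 2$ to write $d_i^{\,p}=d_i^{\,p-1}d_i\le \Delta^{p-1}d_i$ for every $i$, so that
\[
e_p(G)=\sum_{i=1}^n d_i^{\,p}\le \Delta^{p-1}\sum_{i=1}^n d_i=2\,\Delta^{p-1}e(G).
\]

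The one external ingredient needed is an upper bound on $e(G)$. Since $C_5$ has chromatic number $3$, the Erd\"os--Stone theorem gives $e(G)\le ex(n,C_5)=\tfrac14 n^{2}(1+o(1))$ (equivalently, one may invoke the classical fact that $ex(n,C_5)=\lfloor n^2/4\rfloor$ for large $n$). Feeding this into the previous display and comparing with the lower bound yields $\tfrac12\Delta^{p-1}n^{2}(1+o(1))\ge 2^{-p}n^{p+1}(1+o(1))$, that is $\Delta^{p-1}\ge (n/2)^{p-1}(1+o(1))$, and hence $\Delta(G)\ge \tfrac12 n(1+o(1))$. Combined with the trivial bound $\Delta(G)\le n-1$, this confines $\Delta(G)/n$ to the interval $[\tfrac12-o(1),\,1]$, so that on writing $\Delta(G)=an+o(n)$ the leading constant satisfies $a=a(p)\ge\tfrac12$, which is the assertion of the lemma.

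The delicate point, and the step I expect to require the most care, is the meaning of the expression $\Delta(G)=an+o(n)$ itself: the inequalities above deliver the lower bound $\tfrac12$ robustly, but not the convergence of $\Delta(G)/n$ to one fixed number. To make the statement precise I would regard $G=G_n$ as the extremal graph of order $n$, pass to a subsequence along which the bounded ratio $\Delta(G_n)/n$ converges, and take $a$ to be its limit; the actual value of $a=a(p)$ is then identified only once the complete bipartite structure of $G$ (the content of Theorem \ref{thm}) is available, where it is recovered by maximizing $c(1-c)^{p}+(1-c)c^{p}$ over $c\in(0,1)$ and setting $a=1-c^\ast$ for the optimal $c^\ast\le\tfrac12$. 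Since this optimum is attained at $c=\tfrac12$ for $p\le 3$ but at some $c<\tfrac12$ for $p\ge 4$, one indeed gets $a\ge\tfrac12$, with equality exactly in the small-$p$ range, consistent with the lemma and with the statement of the main theorem.
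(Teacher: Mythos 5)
Your proof is correct, and its skeleton---trapping $e_p(G)$ between the lower bound $e_p(T_2(n))=2^{-p}n^{p+1}+o(n^{p+1})$ forced by extremality and an upper bound expressed through $\Delta(G)$---is the same as the paper's; the genuine difference is in how the upper bound is obtained. The paper uses the trivial pointwise estimate $e_p(G)\le n\,\Delta(G)^p$, which needs no information about $G$ beyond its order, and reads off $a^p\ge 2^{-p}$, i.e.\ $a\ge\tfrac12$, directly. You instead factor out only $p-1$ powers of the maximum degree, $e_p(G)\le\Delta^{p-1}\sum_i d_i=2\Delta^{p-1}e(G)$, and then control $e(G)$ by the Tur\'an number $ex(n,C_5)\le\bigl(\tfrac14+o(1)\bigr)n^2$, an external ingredient (Erd\"os--Stone, or the exact classical value $\lfloor n^2/4\rfloor$ for odd cycles) that the paper's proof never invokes. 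Both routes yield exactly $\Delta(G)\ge\tfrac12 n-o(n)$, so what your version costs is an extra citation and the restriction $p\ge 2$ (for $p=1$ your final root extraction is vacuous, whereas the paper's bound still works); what it buys is nothing stronger for this lemma, so the paper's argument is the more economical one. On the other hand, you are more careful than the paper on a real subtlety: the notation $\Delta(G)=an+o(n)$ presupposes that $\Delta(G_n)/n$ converges to a single constant, which neither set of inequalities establishes; your remedy---pass to a convergent subsequence and let the eventual bipartite structure theorem identify $a$ as $1-c^\ast$ for the maximizer $c^\ast$ of $x(1-x)^p+x^p(1-x)$---is a legitimate way to make the statement precise, whereas the paper simply writes ``let $\Delta(G)=an+o(n)$'' and glosses over the issue.
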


\noindent{\it Proof.} Suppose $\Delta(G)=o(n)$, we then have $e_p(G)
\leq n \cdot [\Delta(G)]^p=n \cdot o(n^p) = o(n^{p+1})$, a
contradiction. Let $\Delta(G)=an+o(n)$. Then we have $e_p(G) \leq n
\cdot (an)^p+o(n^{p+1})=a^p n^{p+1}+o(n^{p+1})$, which implies $a
\geq {\frac{1}{2}}$.\qed

In order to describe the structure of the extremal graph $G$, we
introduce some classes of graphs and a graph operation on two or
more graphs. Let $S^k$ denote the set of graphs of order $k$ as
shown in Figure \ref{figure1}. And graphs $S_1,S_2,S_3$ are also
shown in Figure \ref{figure1}. Each of these graphs has a labeled
vertex, i.e., the cross vertex as shown in Figure \ref{figure1}.

\begin{figure}[!hbpt]
\begin{center}
\includegraphics[scale=0.4]{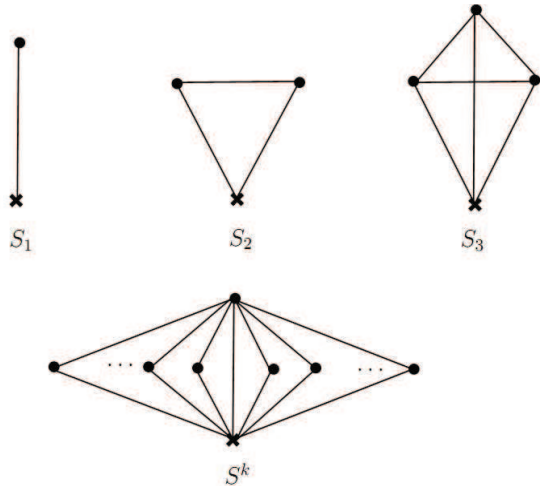}
\end{center}
\caption{The illustration of $S^k$ and $S_i$.}\label{figure1}
\end{figure}

Let $\mathcal{S}=\{S_1,S_2,S_3\}$, $\mathcal{S}^\ast=\mathcal{S}
\cup S^4\cup S^6\cup \cdots$, for all possible $k$. When we say
``$\textit{attaching}$" two graphs in $\mathcal{S}^\ast$, it means
that we identify the labeled vertices in each graph. Note that this
attaching operation could be applied on more than two graphs. Before
the proof, we recall a classical result of Erd\"{o}s and Gallai
\cite{EG}.

\begin{lem}\label{lem2}
If a graph of order $n$ has more than $kn/2$ edges, then it contains
a path of order $k+2$. \qed
\end{lem}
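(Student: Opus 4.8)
The plan is to prove the contrapositive by induction on $n$: if $G$ is a graph of order $n$ that contains no path of order $k+2$, then $e(G) \le kn/2$. For the base case $n \le k+1$ the claim is immediate and needs no hypothesis on paths at all, since $e(G) \le \binom{n}{2} = \frac{n(n-1)}{2} \le \frac{kn}{2}$ because $n-1 \le k$. For the inductive step I would assume $n \ge k+2$ and that the bound holds for every graph on fewer than $n$ vertices. If $G$ is disconnected, with components $G_1,\dots,G_t$ of orders $n_1,\dots,n_t$, then each $G_i$ has fewer than $n$ vertices and still contains no path of order $k+2$, so the induction hypothesis gives $e(G_i) \le kn_i/2$; summing over $i$ yields $e(G) \le kn/2$. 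Thus the whole matter reduces to the connected case.

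Assume now that $G$ is connected. If $G$ has a vertex $v$ with $\deg(v) \le k/2$, I would simply delete it: the graph $G-v$ has $n-1$ vertices and still no path of order $k+2$, so by induction $e(G-v) \le k(n-1)/2$, and hence $e(G) = e(G-v) + \deg(v) \le \frac{k(n-1)}{2} + \frac{k}{2} = \frac{kn}{2}$, as required. The only remaining situation is therefore a connected graph $G$ on $n \ge k+2$ vertices with minimum degree $\delta(G) > k/2$, and for such $G$ the goal becomes to produce a path on $k+2$ vertices, contradicting the standing assumption.

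The heart of the argument — the step I expect to be the main obstacle — is showing that a connected graph with $\delta(G) > k/2$ and $n \ge k+2$ contains a path on at least $k+2$ vertices. Here I would take a longest path $P = v_0 v_1 \cdots v_\ell$; its maximality forces every neighbour of $v_0$ and of $v_\ell$ to lie on $P$. Writing $S = \{ i : v_0 v_i \in E \}$ and $T = \{ i : v_{i-1} v_\ell \in E \}$, both subsets of $\{1,\dots,\ell\}$ of size at least $\delta$, I would split on whether $S \cap T$ is empty. If $S \cap T = \emptyset$, then $2\delta \le |S| + |T| \le \ell$, so $P$ itself has $\ell + 1 \ge 2\delta + 1 \ge k+2$ vertices (using that $\delta > k/2$ forces $2\delta \ge k+1$ for either parity of $k$), a contradiction. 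If instead some $i \in S \cap T$, then $v_0 v_i$ and $v_{i-1} v_\ell$ are edges, and rerouting $P$ along $v_0, v_1, \dots, v_{i-1}, v_\ell, v_{\ell-1}, \dots, v_i, v_0$ yields a cycle through all of $v_0,\dots,v_\ell$. Since $G$ is connected, any vertex outside this cycle would be adjacent to it and would give a path strictly longer than $P$, contradicting maximality; hence the cycle is spanning and $\ell + 1 = n \ge k+2$, again a contradiction. Either way the forbidden path appears, the high-minimum-degree case is impossible, and the induction is complete.
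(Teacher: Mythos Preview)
Your proof is correct and follows the standard inductive route to the Erd\H{o}s--Gallai path theorem: reduce to the connected, high-minimum-degree case and then use the longest-path/rotation argument to either get a long enough path directly or close a Hamiltonian cycle. The paper, however, does not prove this lemma at all---it simply quotes it as a classical result of Erd\H{o}s and Gallai (reference~[EG]) and marks it with a \qed. So there is nothing to compare approaches against; you have supplied a self-contained proof where the paper is content to cite the literature.
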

\noindent {\bf Proof of Theorem \ref{thm}:}\ We will consider the
following two cases.

\noindent {\bf Case 1.} For any vertex $u$ with maximum degree in
$G$, there is no edge in $G[N_G(u)]$.

In this case, we can construct a complete bipartite graph $H$, which
satisfies that $e_p(H) \geq e_p(G)$. The complete bipartite graph
$H=(X,Y)$ can be constructed as follows: $X=(V(G) \setminus N_G(u))
\cup \{u\}$ and $Y=N_G(u)$. It is easy to check that $d_H(v) \geq
d_G(v)$ for any vertex $v\in V(G)$, hence  $e_p(H) \geq e_p(G)$.
Since $G$ is the extremal graph, we can deduce that $G$
itself is isomorphic to $H$.

\noindent {\bf Case 2.}  There exists a vertex $v$ with maximum
degree in $G$, such that there is at least one edge in $G[N_G(v)]$.

Let $u$ be such a vertex with maximum degree. By Lemma \ref{lem1},
we assume that $d_G(u)=an+o(n)$, where $a\geq {\frac{1}{2}}$. Let
$A$ denote the set $\{u\}\bigcup N_G(u)$, and $B$ denote
$V(G)\setminus (\{u\}\bigcup N_G(u))$, respectively. Since $G$ is
$C_5$-free, we have that $G[A]$ is also $C_5$-free. Then we can get
that $G[A]$ must be constructed by attaching some graphs in
$\mathcal{S}^\ast$, and moreover, $u$ is just the vertex identified
by labeled vertices. For example, $G[A]$ may be isomorphic to the
graph as shown in Figure \ref{figure2}.

\begin{figure}[!hbpt]
\begin{center}
\includegraphics[scale=0.8]{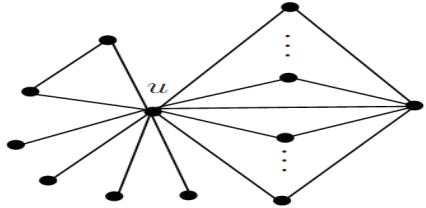}
\end{center}
\caption{An example of $G[A]$.}\label{figure2}
\end{figure}

In fact, considering the edges between $A$ and $B$, we can obtain
the following two observations. Note that the vertices in $B$ can
only be adjacent to the unlabeled vertices, since all of neighbors
of $u$ are in $A$. Without loss of generality, suppose $G[A]$ is
constructed by attaching $t_i$ $S_i$'s, $i=1,2,3$ and $r_k$ $S^k$'s,
for possible $k$. Observe that if $w\in B$, then $w$ cannot be
adjacent to two graphs among all $t_i$ $S_i$'s, and $r_k$ $S^k$'s,
except one case that the two graphs are $S_1$ and $S_1$.

\begin{ob}
For any vertex $w$ in $B$, the edges between $w$ and $A$ can only be
one of the following four cases:

(a)  $w$ is not adjacent to any vertex in $A$.

(b)  $w$ is adjacent to some unlabeled vertices of $S_1$'s;

(c)  $w$ is adjacent to one or two unlabeled vertices of exactly one
$S_2$;

(d)  $w$ is adjacent to only one graph $F$ among all $t_i$ $S_i$'s,
and $r_k$ $S^k$'s. Moreover, $w$ is adjacent to exactly one
unlabeled vertex in $F$.
\end{ob}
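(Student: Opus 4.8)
The plan is to prove the Observation by exploiting two facts: that $u$ is a universal vertex of $G[A]$ (since $A=\{u\}\cup N_G(u)$, every other vertex of $A$ is adjacent to $u$), and that $w\in B$ is not adjacent to $u$, so $N_G(w)\cap A\subseteq N_G(u)$, i.e. $w$ sees only unlabeled vertices. The single gadget driving every case is this: whenever $w$ is adjacent to two distinct vertices $p,q\in N_G(u)$, then $w\,p\,u\,q\,w$ is already a $4$-cycle, and any edge of a petal incident with $p$ or $q$ that stays inside $N_G(u)$ upgrades it to a $C_5$. Here I would use that each petal is a component of $G[N_G(u)]$ together with $u$, and that $G[N_G(u)]$ contains no $P_4$ (a $P_4$ there would close through the universal vertex $u$ into a $C_5$), so each petal is a single edge (which is $S_1$), a triangle (which is $S_2$), or a larger $C_5$-free block. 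The key point is that in every petal \emph{except} the single edge, the unlabeled vertex $p$ that $w$ meets has a further neighbour $p'\in N_G(u)$ inside the same petal.

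First I would prove the remark preceding the Observation: $w$ cannot be adjacent to two distinct petals unless both are copies of $S_1$. Suppose $w\sim p$ in a petal $P$ and $w\sim q$ in a different petal $Q$. If $P$ is not a single edge, choose an internal neighbour $p'$ of $p$; then $w,p,p',u,q$ are five distinct vertices and $w\,p\,p'\,u\,q\,w$ is a $C_5$, a contradiction. Hence neither $P$ nor $Q$ can be anything but $S_1$. Conversely, if $w$ meets only single edges $u p_1,\dots,u p_m$, then each $p_i$ has exactly the two neighbours $u$ and $w$ in $A\cup\{w\}$, so no three of the $p_i$ lie on a common cycle and the only cycles present are the $4$-cycles $w\,p_i\,u\,p_j\,w$; no $C_5$ arises. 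This disposes of case (a) (no edges) and case (b) (several $S_1$'s), and reduces everything else to $w$ meeting exactly one petal $F$ that carries internal structure.

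It then remains to analyse a single non-edge petal $F$. If $F=S_2$ is a triangle $u\,a\,b$, then even when $w$ is adjacent to both $a$ and $b$ the whole configuration lives on the four vertices $\{u,a,b,w\}$, which is too small to contain a $C_5$; so $w$ may meet one or both unlabeled vertices, which is case (c). If instead $F$ is one of the larger blocks ($S_3$ or an $S^k$ with $k\ge 4$), I would show $w$ can be adjacent to at most one unlabeled vertex, for otherwise $u$ together with an internal edge of $F$ supplies a fifth vertex and an alternating path closing into a $C_5$: for a book with center $y$ and leaves $z_1,\dots,z_m$, two leaves give $w\,z_1\,u\,y\,z_2\,w$ and a center–leaf pair gives $w\,y\,z_2\,u\,z_1\,w$, while for a $K_4$-type block $u\,x_1\,x_2\,x_3$ two of the $x_i$ give $w\,x_1\,u\,x_3\,x_2\,w$. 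This is exactly case (d).

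The cycle-exhibition inside each petal type is routine; the genuine point to get right is the clean dichotomy between $S_2$ and the larger petals, namely that $S_2$ together with $w$ spans only four vertices and can therefore tolerate two contacts, whereas every larger petal leaves room for the ``$C_4$ plus one internal edge'' gadget to close into a $C_5$. The main obstacle is thus bookkeeping: one must check that the list of petal types (single edge, triangle, book, and $K_4$-type block) is exhaustive --- which follows from $G[N_G(u)]$ being $P_4$-free, hence a disjoint union of stars and triangles --- and that the five chosen vertices are genuinely distinct in each subcase.
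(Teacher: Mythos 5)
Your proof is correct and follows essentially the same route the paper takes (the paper merely asserts the observation, with the remark that vertices of $B$ see only unlabeled vertices and cannot meet two petals unless both are $S_1$'s): you justify exactly these facts by exhibiting explicit $5$-cycles through $u$, after noting that $G[N_G(u)]$ is $P_4$-free and hence a disjoint union of stars and triangles. The explicit cycle checks (two petals, two leaves of a book, center--leaf pair, two vertices of the $K_4$-type block) are all valid, so your write-up just supplies the details the paper leaves to the reader.
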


\begin{ob}
For any edge $w_1w_2$ in $G[B]$, the edges between $w_1$ ($w_2$) and
$A$ can only be one of the following two cases:

(a)  $w_1$ and $w_2$ are adjacent to the same unlabeled vertex in
exactly one graph among all $t_i$ $S_i$'s and $r_k$ $S^k$'s;

(b)  one of $w_1$ and $w_2$, say $w_1$,  is adjacent to no vertices
in $A$, $w_2$ is adjacent to vertices in $A$ as described of
Observation 1.
\end{ob}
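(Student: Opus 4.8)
The plan is to exploit a single canonical $5$-cycle that threads through the dominating vertex $u$. Recall from the setup that $u$ is adjacent to every vertex of $A\setminus\{u\}$, that $u$ is the only labeled vertex after the attaching operation, and that by the remark preceding Observation 1 every vertex of $B$ meets $A$ only in unlabeled vertices. In particular, any neighbor of $w_1$ or of $w_2$ that lies in $A$ is an unlabeled vertex and is therefore joined to $u$. This is the one fact doing all the work: it makes any two attachment points of $B$-vertices reachable from one another through $u$ in two steps, so that an edge inside $B$ closes such a path into a forbidden cycle.

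First I would split according to whether both endpoints of the edge $w_1w_2$ send an edge into $A$. Suppose both do. For any $x\in N_G(w_1)\cap A$ and any $y\in N_G(w_2)\cap A$ with $x\neq y$, the vertices $x,y$ are distinct unlabeled vertices, both adjacent to $u$, and the five vertices $w_1,x,u,y,w_2$ are pairwise distinct (the $w_i$ lie in $B$ while $x,y,u$ lie in $A$, with $x\neq y$ and $x,y\neq u$); hence
$$w_1\,x\,u\,y\,w_2\,w_1$$
is a $C_5$, contradicting $C_5$-freeness. Thus every $A$-neighbor of $w_1$ must equal every $A$-neighbor of $w_2$. Letting $x$ and $y$ range over all choices then forces $N_G(w_1)\cap A=N_G(w_2)\cap A=\{z\}$ for a \emph{single} unlabeled vertex $z$, since a second $A$-neighbor of either endpoint would supply a pair $x\neq y$ of the forbidden type. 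Because $z$ lies in exactly one attached piece (distinct pieces meet only in $u$), this is precisely conclusion~(a).

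In the remaining case at least one endpoint, say $w_1$, has no neighbor in $A$; then $w_2$ is an arbitrary vertex of $B$ whose edges to $A$ are governed by Observation 1, which is conclusion~(b), with Observation 1(a) absorbing the degenerate possibility that $w_2$ also misses $A$. The argument is short because the one well-chosen cycle settles everything, so the only place demanding care is the distinctness bookkeeping: I would record explicitly that $x\neq y$ together with $x,y\in A\setminus\{u\}$ and $w_1\neq w_2\in B$ yields five genuinely distinct vertices, since the degenerate collisions $x=u$, $y=u$, or $x=y$ are exactly the benign configurations the observation permits. Notably, no information about the internal shape of the pieces $S_1,S_2,S_3,S^k$ is needed here—only that they overlap solely in $u$ and that each non-$u$ vertex is unlabeled and hence adjacent to $u$.
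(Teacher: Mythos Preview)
Your argument is correct: the paper states Observation~2 without proof, and the canonical $5$-cycle $w_1\,x\,u\,y\,w_2\,w_1$ you construct is exactly the verification the authors leave to the reader. Your distinctness bookkeeping and the reduction to a single common neighbor $z$ are precisely what is needed, so there is nothing to add.
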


With the aid of the above two observations and the assumption of
$G$, we can prove the following claim.

\begin{claim}\label{claim1}
$G[A]$ is isomorphic to the graph obtained by attaching one $S_2$
and $d_G(u)-2$ $S_1$'s.
\end{claim}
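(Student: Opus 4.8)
The plan is to recast Claim~\ref{claim1} as a statement about the single graph $H:=G[N_G(u)]$ and then pin down its shape by an extremal exchange. Since every vertex of $A\setminus\{u\}$ lies in $N_G(u)$, the attaching description established above says exactly that each block of $\mathcal{S}^\ast$ used to build $G[A]$ corresponds to one component of $H$: a copy of $S_1$ is an isolated vertex of $H$ and contributes no edge, whereas every other member of $\mathcal{S}^\ast$ contributes at least one edge inside $N_G(u)$, a single $S_2$ contributing exactly one. Hence \emph{$G[A]$ is one $S_2$ together with $d_G(u)-2$ copies of $S_1$ if and only if $H$ has exactly one edge}. As Case~2 already guarantees at least one edge in $H$, it remains only to show that the extremality of $G$ forbids $H$ from having two or more edges.

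Before the exchange I would record the shape of the ``efficient core''. From $e_p(G)\ge e_p(T_2(n))=\left(\tfrac12\right)^p n^{p+1}+o(n^{p+1})$ and Lemma~\ref{lem1}, the set $L$ of $S_1$-leaves at $u$ and the set $B_{\mathrm{leaf}}\subseteq B$ of vertices joined to them must form a biclique-like core with both $|L|$ and $|B_{\mathrm{leaf}}|$ of order $\Theta(n)$, since otherwise $e_p(G)$ would fall below $e_p(T_2(n))$. Moreover, by Observation~1 and Observation~2 the $B$-neighbours of any non-$S_1$ block, and of distinct blocks, form pairwise disjoint private sets, each disjoint from $B_{\mathrm{leaf}}$; the point of these observations is precisely that a vertex inside such a block, or a private $B$-vertex attached to it, cannot also see the core without closing a five-cycle through $u$ and the block's internal edge.

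Now suppose for contradiction that $H$ has at least two edges, so that $G[A]$ contains, besides one edge that I keep, a further block $F\ne S_1$. I would form $G'$ by deleting all edges of $H$ inside $F$ (turning every vertex of $F$ into an $S_1$-leaf at $u$), then absorbing $F$ and its private $B$-neighbours into the core: join each freed vertex of $F$ to all of $B_{\mathrm{leaf}}$ and join each private $B$-vertex of $F$ to all leaves. The graph $G'$ keeps the same $n$ vertices and stays $C_5$-free, because the enlarged core is still bipartite between the leaves and $\{u\}\cup B_{\mathrm{leaf}}$, and the disjointness from Observation~1 and Observation~2 prevents the one retained edge or any edge of $G[B]$ from combining with the biclique to close a five-cycle. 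For the count, each absorbed private $B$-vertex jumps from small degree to degree $\Theta(n)$, raising its contribution to $e_p$ by $\Theta(n^p)$; since there can be $\Theta(n)$ such vertices the total gain dominates the $O(1)$ deleted internal edges of $F$, whence $e_p(G')>e_p(G)=ex_p(n,C_5)$, a contradiction. Therefore $H$ has at most one edge, and with Case~2 exactly one, which is the claim.

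The step I expect to be the main obstacle is controlling the count when the block $F$ already carries a large private $B$-set: then the freed vertices of $F$ need not gain degree, and the strict increase has to come entirely from upgrading the private $B$-vertices into the core. Since Observation~1 bounds the number of neighbours such a private vertex has in $A$ by two, its degree is governed by $G[B]$, so the bookkeeping must show that these vertices are genuinely inefficient and that reconnecting them cannot inadvertently create a five-cycle with the retained edge or with $G[B]$. Carrying out this disjointness-and-degree accounting rigorously and uniformly over all possible blocks $F$ (the various $S^k$, $S_3$, and multiple $S_2$'s) is where the real work lies.
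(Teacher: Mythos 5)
Your reduction of the claim to ``$H=G[N_G(u)]$ has exactly one edge'' is correct, and your degree accounting would go through if the rewired graph were legal; the genuine gap is the assertion that your $G'$ stays $C_5$-free. It does not, and Observations 1 and 2 do not provide the disjointness you invoke: they constrain edges between $B$ and $A$, not edges inside $B$. Concretely, Observation 2(a) explicitly allows an edge $b_1b_2$ in $G[B]$ whose two endpoints are adjacent to one and the same leaf $\ell\in A_1$; this configuration is $C_5$-free in $G$, so a hypothetical extremal $G$ with a second block $F$ may contain it, and your proof must handle it. After your exchange, every freed vertex $f$ of $F$ is joined to all of $B_{\mathrm{leaf}}$, in particular to both $b_1$ and $b_2$, and then $u\,\ell\,b_1\,b_2\,f\,u$ is a five-cycle ($u\ell$, $\ell b_1$, $b_1b_2$ are old edges, $b_2f$ is new, and $fu$ survives since you never delete edges at $u$). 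The same failure occurs on the other side: two private $B$-neighbours $w,w'$ of $F$ may be adjacent to each other (again permitted by Observation 2(a)), and once both are joined to all leaves, $w\,\ell_1\,u\,\ell_2\,w'\,w$ is a five-cycle. So the exchange is illegal precisely when $G[B]$ has edges touching the vertices you absorb, which is the generic situation. Your closing paragraph correctly flags this as the main obstacle, but the body of the proof dismisses it with a claim that is false; repairing it forces you to also delete the offending $B$-edges, and then the comparison is no longer the one-line domination argument you give. Indeed, this tension is exactly why the paper must carry the competitor $G^\ast$ --- in which the two vertices of $B$ that keep their $B$-neighbours are \emph{not} joined to any leaf --- through the whole of Claim 2.

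It is also worth noting that the paper proves Claim 1 by an entirely different and softer route, with no rewiring at all: by Observations 1 and 2, every vertex of $B$ sends at most two edges to the union of the non-$S_1$ blocks, so the total number of such edges is $O(|B|)=O(n)$; consequently all vertices lying in non-$S_1$ blocks together contribute $o(n^{p+1})$ to $e_p(G)$, while the leading coefficient of $e_p(G)$ must be at least $(1/2)^p$. Hence in an extremal graph the non-$S_1$ structure is kept to the minimum compatible with the Case 2 hypothesis, namely a single $S_2$. Your approach, even if repaired, is really an attempt to prove the stronger structural statement that the paper defers to Claim 2; as a proof of Claim 1 it is unsound at the $C_5$-freeness step.
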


\begin{proof}
Let $\mathcal{A}=\{S\in \{S_2,S_3,S^k\}: S\subseteq G[A]$ and some
unlabeled vertex $v$ in $S$ has degree $d_G(v)=O(n)\}$. By the
previous observations, we have $|\mathcal{A}|=o(n)$, since
$|A|=an+o(n)$, $|B|=(1-a)n+o(n)$ and the number of edges between the
vertices in $\mathcal{A}$ and $B$ will be no more than $|B|$. So
$\sum\limits_{v \in V{\rm{(}}S{\rm{)}},\  S \in \mathcal{A} }
{{d_G}^p\left( v \right)} =o(n^{p+1})$. Therefore, the vertices in
$\mathcal{A}$ have no contribution to the value of the coefficient
of $n^{p+1}$ in $e_p(G)$. In order to maximize the value of
$e_p(G)$, $G[A]$ must consist of as many $S_1$'s as possible. Since
we assume that there exists at least one edge in $G[A]$, $G[A]$ must
be isomorphic to the graph obtained by attaching of one $S_2$ and
$d_G(u)-2$ $S_1$'s.
\end{proof}

Let $A_1$ denote the set of all the unlabeled vertices in $S_1$
contained in $G[A]$.

From Claim \ref{claim1}, the extremal graph in Case 1 satisfies the
description in Claim \ref{claim1}. We construct two graphs $G'$ and
$G^\ast$ to characterize the extremal graph $G$ in detail. Let
$V(G')=V(G^\ast)=V(G)$, both $G'$ and $G^\ast$ satisfy the
assumption of Case 2 and the description of Claim \ref{claim1},
i.e., in both $G'$ and $G^\ast$, let $u$ be the vertex with maximum
degree $d_G(u)$, there exist edges in $G[N_G'(u)]$ and
$G[N_G^\ast(u)]$. Without loss of generality, let $A=\{u\}\bigcup
N_G'(u)=\{u\}\bigcup N_G^\ast(u)$, and let
$B=V(G')\setminus(\{u\}\bigcup N_G'(u))=V(G^\ast)\setminus
(\{u\}\bigcup N_G^\ast(u))$. Observe that $G'[A]$ and $G^\ast[A]$
satisfy the description of Claim \ref{claim1}. Hence, we can still
use notation $A_1$ to denote the set of all the unlabeled vertices
in $S_1$ contained in $G'[A]$, and the same set in $G^\ast[A]$.

The difference between $G'$ and $G^\ast$ is as follows.  For $G'$,
$G'[B]$ is empty, every vertex in $B$ is adjacent to every vertex in
$A_1$ and there is no edge between $A\setminus A_1$ and $B$. And for
$G^\ast$, there are two vertices in $B$, say $w_1$, $w_2$, such that
$G^\ast[B]$ is a complete bipartite graph with one class $\{w_1
,w_2\}$, every vertex in $B\setminus \{w_1 ,w_2\}$ is adjacent to
every vertex in $A_1$ and there is no edge between $A\setminus A_1$
and $B$.

The next claim characterizes the extremal graph $G$ in Case 2. Since
we only consider the case that $n$ is sufficiently large, from the
preceding discussions, we can assume that $d_G(u)=an$ instead of
$an+o(n)$ to simplify the calculation.

\begin{claim}\label{claim2}
$e_p(G)$ is equal to either $e_p(G')$ or $e_p(G^\ast)$.
\end{claim}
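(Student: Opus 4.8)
The plan is to prove the upper bound $e_p(G)\le\max\{e_p(G'),e_p(G^\ast)\}$. Since $G'$ and $G^\ast$ are themselves $C_5$-free graphs on $n$ vertices, the extremality of $G$ gives the reverse inequalities $e_p(G)\ge e_p(G')$ and $e_p(G)\ge e_p(G^\ast)$, and the two bounds together force $e_p(G)$ to equal one of $e_p(G'),e_p(G^\ast)$. By Claim \ref{claim1} the induced graph $G[A]$ is already fixed (one $S_2$ together with $d_G(u)-2$ copies of $S_1$), so the only freedom left in $G$ lies in the edges inside $B$ and between $A$ and $B$; these are exactly the edges constrained by Observations 1 and 2. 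The whole argument is therefore an optimization of $e_p(G)=\sum_v d_G(v)^p$ over these remaining edges.

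First I would dispose of the two non-labelled vertices of the unique $S_2$, i.e.\ of $A\setminus A_1$; call them $x,y$, so that $uxy$ is the triangle. If $x$ were adjacent to some $w\in B$, then $w$ could not be adjacent to any vertex of $A_1$: a neighbour $a\in A_1$ would create the $5$-cycle $w\,x\,y\,u\,a\,w$ through the triangle, contradicting $C_5$-freeness. Hence attaching a $B$-vertex to $A\setminus A_1$ costs it the $\sim d_G(u)$ neighbours it could have had in $A_1$, which is wasteful; an exchange argument then shows that the extremal configuration has no edge between $A\setminus A_1$ and $B$, exactly as in both $G'$ and $G^\ast$.

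Next I would analyse the edges incident to $B$. Split $B$ into the vertices having a neighbour in $A$ (which, by Observation 1 and the previous paragraph, can only be joined to $A_1$) and the ``hub'' set $B_0$ of vertices with no neighbour in $A$. An exchange argument shows that in the optimum every vertex of the first type is joined to all of $A_1$, that these vertices are independent in $G[B]$ (if two of them reaching distinct $a_1,a_2\in A_1$ were adjacent, the cycle $b\,b'\,a_2\,u\,a_1\,b$ would be a $C_5$, which is precisely Observation 2), and that the hubs form an independent set each joined to all the non-hub vertices (two adjacent hubs meeting a common non-hub would close the $C_5$ $w_1\,w_2\,b\,a\,b'\,w_1$). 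Up to the $o(n^{p+1})$ error from the discarded low-degree ``case (a)'' edges, the configuration is then determined by the single integer $q=|B_0|$, and it is $G'$ when $q=0$ and $G^\ast$ when $q=2$. The decisive point is the maximum-degree constraint: a non-hub vertex is adjacent to all $d_G(u)-2$ vertices of $A_1$ and to all $q$ hubs, so its degree is $d_G(u)-2+q$, which cannot exceed the maximum degree $d_G(u)$; hence $q\le 2$.

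It then remains to compare the three admissible values $q\in\{0,1,2\}$. Writing the three degree-power sums explicitly as polynomials in $n$ (with $d_G(u)=an$ and $|B|=(1-a)n$), one checks that the value at $q=1$ never exceeds the larger of the values at $q=0$ and $q=2$, so the maximum is attained by $G'$ or by $G^\ast$. The main obstacle I anticipate is precisely this reduction to the two ``nice'' configurations: one must rule out the partially-adjacent possibilities (a non-hub joined to only part of $A_1$, or a hub joined to only part of $B$) by showing that each such deviation strictly decreases $e_p$, and then carry out the somewhat delicate lower-order comparison of $q=0,1,2$, since all three share the same leading coefficient $\bigl[(1-a)a^p+a(1-a)^p\bigr]n^{p+1}$ and the winner is decided only by the lower-order terms — which is also where the dependence of the extremal structure, and hence of the constant $c$, on $p$ enters.
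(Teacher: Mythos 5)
Your overall frame is sound: the reverse inequalities $e_p(G)\ge e_p(G')$ and $e_p(G)\ge e_p(G^\ast)$ do follow from extremality, your $C_5$-freeness observations (the cycle through $w\,x\,y\,u\,a\,w$, the independence forced by Observation 2) are correct, and the reduction to a one-parameter family indexed by the number $q$ of hubs is an attractive reorganization of what the paper does by case analysis. But there is a genuine gap, and it sits exactly where you yourself locate ``the main obstacle'': every step that does real work is asserted rather than proved. The claim that the optimum has no edge between $A\setminus A_1$ and $B$ is not a routine exchange. Your ``wastefulness'' heuristic weighs a $B$-vertex's lost access to $A_1$ against nothing, but a vertex $w$ adjacent to $v_1\in A\setminus A_1$ can instead have up to $|B|\approx (1-a)n$ neighbours inside $B$, and each of those neighbours is then also barred from $A_1$, so the trade-off involves linearly many vertices on both sides and must be settled quantitatively. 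In the paper this costs all of Case 4 (equations (\ref{eq2}), (\ref{eq3}) and the comparison of $n^p$-coefficients against $e_p(G^\ast)$), and the analogous statement for a high-degree vertex inside $B$ costs Subcases 3.1--3.3, including the Erd\H{o}s--Gallai bound of Lemma \ref{lem2} (applicable because $G[B_1]$ contains no path on four vertices) and the inequality $f(a,y)>0$, whose proof is itself a page of estimates.

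Two further points are shakier than you indicate. First, your bound $q\le 2$ is circular: it assumes every non-hub is adjacent to all of $A_1$ and to all $q$ hubs, which is precisely the ``full adjacency'' conclusion the missing exchange arguments were supposed to deliver; without it, nothing in your argument excludes, say, five hubs each adjacent to two fifths of the non-hubs, so that every non-hub still respects $\Delta(G)=an$ --- ruling this out needs a separate convexity/averaging argument. Second, the comparison of $q\in\{0,1,2\}$ hides a degeneracy: writing $c(q)$ for the coefficient of $n^p$ in the $q$-configuration, one finds
$$c(q)=(q-2)\left[(1-a)^p+p(1-a)a^{p-1}\right]-q\left[a^p+pa(1-a)^{p-1}\right],$$
so $c(1)=\frac{1}{2}\left(c(0)+c(2)\right)$; at $a=\frac{1}{2}$ the two brackets coincide, hence $c(0)=c(1)=c(2)$ and all three configurations agree through order $n^p$, the winner being decided only at order $n^{p-1}$. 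So ``one checks'' requires expanding one order further than the terms you discuss. None of this says your program is unworkable --- the structure you describe is, in substance, the one the paper's proof of Claim \ref{claim2} confirms --- but as written the proposal defers exactly the inequalities (the analogues of $f(a,y)>0$ and the coefficient comparisons) in which the entire content of the claim resides.
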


\begin{proof}
Firstly, we calculate $e_p(G')$ and $e_p(G^\ast)$. For any vertex
$v\in A_1$, $d_{G'}(v)=(1-a)n$; for any vertex $v\in A\setminus
(A_1\cup \{u\})$, $d_{G'}(v)=2$; and for any vertex $v$ in $B$,
$d_{G'}(v)=an-2$. Hence,
\[{e_p}(G') = {\left( {an} \right)^p}{\rm{ + }}2 \times {2^p}{\rm{ + }}(an{\rm{ - }}2)
{[(1{\rm{ - }}a)n]^p}{\rm{ + }}[(1{\rm{ - }}a)n{\rm{ -
}}1]{(an{\rm{ - }}2)^p}.\] Similarly, Observe that for any vertex
$v\in A_1$, $d_{G^\ast}(v)=(1-a)n-2$, and for any vertex $v\in
A\setminus (A_1\cup \{u\})$, $d_{G^\ast}(v)=2$, also we have
$d_{G^\ast}(u)=an$, $d_{G^\ast}(w_i)=(1-a)n-3$, $i=1,2$, and for any
vertex $w$ in $B\setminus \{w_1 ,w_2\}$, $d_{G^\ast}(w)=an$. It is
easy to calculate that $e_p(G^\ast)= \left[ {\left( {1 - a}
 \right)n - 2} \right]
{\left( {an} \right)^p} + \left( {an - 2} \right){\left[ {\left( {1
- a} \right)n - 2} \right]^p} + 2{\left[ {\left( {1 - a} \right)n -
3} \right]^p} + 2 \times {2^p}$.

We assume $e_p(G)> e_p(G')$. Then, there must exist some vertex $v$
satisfying $d_G(v)> d_{G'}(v)$. Note that  for each vertex $v\in
A_1$, the degree of $v$ is at most $(1-a)n$, we only need to
consider such two cases.

{\bf Case 3.} There exists some vertex $v\in B$, such that $d_G(v)>
d_{G'}(v)$, and for each vertex $v'\in A$, $d_G(v')$ is no larger
than $d_{G'}(v')$.

Let $B_1=N_G(v)\cap B$. In the following, we will consider the
following two subcases.

{\bf Subcase 3.1.} $|N_G(v)\cap A|=xn$ and $|B_1|=yn$, where $0\leq
x < a$, $0<y\leq 1-a$, $x+y\leq a$, and if $x=0$, $|N_G(v)\cap
A|\geq 2$.

Firstly, we know that $G[B_1]$ contains no path of order $4$, since
otherwise, there will exist one $C_5$ including $v$.

By Lemma \ref{lem2}, we have that the number of edges in $G[B_1]$,
denote by $e(G[B_1])$, is no more than $2yn/2=yn$. Hence,
$e(G[B_1])=\sum\limits_{v \in {B_1}}{{d_{G[{B_1}]}}(v)}\leq 2yn$. We
will calculate the maximum possible value of $e_p(G)$. We assume
that there is one vertex in $B_1$ with degree $d_{G[{B_1}]}=yn-1$
and the remaining vertices in $B_1$ with degree $d_{G[{B_1}]}=1$.
For each vertex in $B_1$, we can assume that it is adjacent to each
vertex in $B\setminus B_1$. (Note that the vertices in $B_1$ cannot
be adjacent to the vertices in $A$ from the previous observations.)
Suppose that all the vertices in $B\setminus B_1$ reach the maximum
degree $an$ in $G$. We can see that such a situation can maximize
the value of $e_p(G)$, and it may be much larger than the exact
value of $e_p(G)$. We then have
\begin{eqnarray*}
e_p(G) &\leq &  {\left( {an} \right)^p}{\rm{ + }}2 \times {2^p}{\rm{
+ }}{\left( {an} \right)^p}{\rm{ + }}\left[ {\left( {1{\rm{ - }}a} \right)n{\rm{
- }}2{\rm{ - }}yn} \right]{\left( {an} \right)^p}\\
\null  &\null&   +\left( {an{\rm{ - }}2} \right){\left[ {\left(
{1{\rm{ - }}a} \right)n{\rm{ - }}yn} \right]^p}{\rm{ +
}}\left( {yn{\rm{ - }}1} \right){\left[ {\left( {1{\rm{ - }}a}
\right)n{\rm{ - }}yn} \right]^p}{\rm{ + }}{\left[ {\left( {1{\rm{ -
}}a} \right)n{\rm{ - }}2} \right]^p}.
\end{eqnarray*}
Expanding the right hand side of the inequality above, the
coefficient of $n^{p+1}$ is $$\left( {1{\rm{ - }}a{\rm{ - }}y}
\right){a^p}{\rm{ + }}a{\left( {1{\rm{ - }}a{\rm{ - }}y}
\right)^p}{\rm{ + }}y{\left( {1{\rm{ - }}a{\rm{ - }}y}
\right)^p}{\rm{ = }}\left( {y{\rm{ + }}a} \right){\left( {1{\rm{ -
}}a{\rm{ - }}y} \right)^p}{\rm{ + }}\left( {1{\rm{ - }}a{\rm{ - }}y}
\right){a^p}.$$ Since from the previous calculation we know that the
coefficient of $n^{p+1}$ in $e_p(G')$ is $a{\left( {1{\rm{ - }}a}
\right)^p}{\rm{ + }}{a^p}\left( {1{\rm{ - }}a} \right)$, to derive a
construction to our assumption, it is sufficient to show that
\[\left( {y{\rm{ +}}a} \right){\left( {1{\rm{ - }} a{\rm{ - }}y}
\right)^p}{\rm{ +}}\left( {1{\rm{ - }}a{\rm{ - }}y}
 \right){a^p} <  a{\left( {1{\rm{
- }}a} \right)^p}{\rm{ + }}{a^p}\left( {1{\rm{ - }}a} \right)\] for
sufficiently large $n$. Let $$f(a,y)=a{\left( {1{\rm{ - }}a}
\right)^p}{\rm{ + }}{a^p}\left( {1{\rm{ - }}a} \right){\rm{ -
}}\left[ {\left( {y{\rm{ + }}a} \right){{\left( {1{\rm{ - }}a{\rm{ -
}}y} \right)}^p}{\rm{ + }}\left( {1{\rm{ - }}a{\rm{ - }}y}
\right){a^p}} \right].$$ We will show that $f(a,y)>0$. We first
suppose that $\frac{{1{\rm{ - }}a}}{2} < y < 1 - a$, i.e., $1-a-y<y<a$.
Then, we have
\begin{eqnarray*}
f(a,y)  &=&    a{\left( {1{\rm{ - }}a} \right)^p}{\rm{ - }}y{\left( {1{\rm{
- }}a{\rm{ - }}y} \right)^p}{\rm{ - }}a{\left( {1{\rm{ - }}a{\rm{ - }}y} \right)^p}{\rm{ + }}{a^p}y\\
\null   &>&    a{\left( {1{\rm{ - }}a} \right)^p}{\rm{ - }}y{\left( {1{\rm{
- }}a{\rm{ - }}y} \right)^p}{\rm{ - }}a{y^p}{\rm{ + }}{a^p}y\\
\null   & > & a{y^p}{\rm{ - }}y{\left( {1{\rm{ - }}a{\rm{ - }}y}
\right)^p}{\rm{ - }}a{y^p}{\rm{ + }}{a^p}y=   y{a^p}{\rm{ -
}}y{\left( {1{\rm{ - }}a{\rm{ - }}y} \right)^p} >   0.
\end{eqnarray*}
Now we suppose $0<y \le \frac{{1{\rm{ - }}a}}{2}$. In this case, we
have
\begin{eqnarray*}
f(a,y)  & = &   a{\left( {1{\rm{ - }}a} \right)^p}{\rm{ - }}\left( {y{\rm{
+ }}a} \right){\left( {1{\rm{ - }}a{\rm{ - }}y} \right)^p}{\rm{ + }}{a^p}y\\
\null   &\geq & a{\left( {1{\rm{ - }}a} \right)^p}{\rm{ + }}{a^p}y{\rm{
- }}\frac{{1{\rm{ + }}a}}{2}{\left( {1{\rm{ - }}a{\rm{ - }}y} \right)^p}\\
\null   &=&  a{\left( {1{\rm{ - }}a} \right)^p}{\rm{ + }}{a^p}y{\rm{
- }}\frac{{1{\rm{ + }}a}}{2}{\left( {1{\rm{ - }}a} \right)^p}{\rm{
+ }}\frac{{1{\rm{ + }}a}}{2}{\left( {1{\rm{ - }}a} \right)^p}{\rm{
- }}\frac{{1{\rm{ + }}a}}{2}{\left( {1{\rm{ - }}a{\rm{ - }}y} \right)^p}\\
\null   &=&  {a^p}y{\rm{ + }}\frac{{a{\rm{ - }}1}}{2}{\left( {1{\rm{
- }}a} \right)^p}{\rm{ + }}\frac{{1{\rm{ + }}a}}{2}\left[ {{{\left( {1{\rm{
- }}a} \right)}^p}{\rm{ - }}{{\left( {1{\rm{ - }}a{\rm{ - }}y} \right)}^p}} \right]\\
\null   &>&  {a^p}y{\rm{ + }}\frac{{a{\rm{ - }}1}}{2}{\left( {1{\rm{
- }}a} \right)^p}{\rm{ + }}\frac{{1{\rm{ + }}a}}{2}{y^p}>
{a^p}y{\rm{ + }}\frac{{a{\rm{ - }}1}}{2}{\left( {1{\rm{
- }}a} \right)^p}{\rm{ + }}\frac{{1{\rm{ - }}a}}{2}{y^p}\\
\null   &=&   {a^p}y{\rm{ + }}\frac{{1{\rm{ - }}a}}{2}\left[
{{y^p}{\rm{ - }}{{\left( {1{\rm{ - }}a} \right)}^p}} \right]\geq
y\left[ {{a^p}{\rm{ + }}{y^p}{\rm{ - }}{{\left( {1{\rm{ - }}a}
\right)}^p}} \right]>   0.
\end{eqnarray*}

Hence, we have proved that $f(a,y)>0$.

{\bf Subcase 3.2.} $|N_G(v)\cap A|=an-o(n)$ and $|B_1|=o(n)$.

With similar methods, we have
\begin{eqnarray*}
e_p(G) &\leq &  {\left( {an} \right)^p}{\rm{ + }}2 \times {2^p}{\rm{
+ }}{\left( {an} \right)^p}{\rm{ + }}\left[ {\left( {1{\rm{
- }}a} \right)n{\rm{ - }}2{\rm{ - }}o{\rm{(}}n{\rm{)}}} \right]{\left( {an} \right)^p}\\
\null  &\null& {\rm{ + }}\left( {an{\rm{ - }}2} \right){\left[ {\left( {1{\rm{
- }}a} \right)n{\rm{ - }}o{\rm{(}}n{\rm{)}}} \right]^p}
{\rm{ + }}\left( {o{\rm{(}}n{\rm{) - }}1} \right){\left[ {\left( {1{\rm{
 - }}a} \right)n{\rm{ - }}o{\rm{(}}n{\rm{)}}} \right]^p}{\rm{
 + }}{\left[ {\left( {1{\rm{ - }}a} \right)n{\rm{ - }}2} \right]^p}.
\end{eqnarray*}
Similarly, there are two cases when we compare the values of
$e_p(G)$ and $e_p(G')$.

$\bullet$ The  $o(n)$ part of $|N_G(v)\cap A| $, denoted  by
$\omega$, satisfies that $\omega\rightarrow +\infty$.

Observe that $n^p<\omega n^p < n^{p+1}$. So we need to consider the
coefficient of $\omega n^p$. By expanding the expression of
$e_p(G)$, it is clear that the coefficient is ${\rm{ - }}{a^p}{\rm{
+ }}{\left( {1{\rm{ - }}a} \right)^p}{\rm{ \leq }}0$, which implies
$e_p(G)\leq e_p(G')$, a contradiction.

$\bullet$ The $o(n)$ part of $|N_G(v)\cap A| $ is a constant.

Let $o(n)=c$, $c\geq 1$. We will prove in that subcase, $G$ is
isomorphic to $G^\ast$. Now we consider the structure of $G$. If a
vertex in $B\setminus B_1$ has degree $an$, then at least two of its
neighbors in $B$ will be not adjacent to any vertices in $A$. So in
order to maximize the number of vertices whose degree is $O(n)$, we
suppose that as many as possible vertices in $B\setminus B_1$ have
degree $an$, all of them have only two neighbors in $B$. It is not
difficult to get that if they share two common neighbors in $B$, we
will have a larger value of $e_p(G)$. Furthermore, let these two
common neighbors be both in $B_1$, and there are no other vertices
in $B_1$, we can get the maximum value of $e_p(G)$ in that
situation. And we can see that $c$ is equal to $2$ in such case.
Moreover, $G$ is isomorphic to $G^\ast$.

{\bf Subcase 3.3.} $|N_G(v)\cap A|=1$.

Since $a\geq {\frac{1}{2}}$, $|B|=(1-a)n-1$,  and we assume that
$d_G(v)>d_{G'}(v)=an-2$, we have that $a={\frac{1}{2}}$ and
$|B_1|=(1-a)n-2$, i.e., $v$ is adjacent to every vertex in
$B\setminus \{v\}$. Let $N_G(v)\cap A=\{v'\}$, by Observation 2, the
vertices in $B$ can only be adjacent to $v'$ in $A$. To maximize the
value of $e_p(G)$, let all the vertices in $B$ be adjacent to $v'$
and $G[B]$ be a complete graph. Note that every vertex has its
maximal possible degree. Hence, $e_p(G)\leq 2 \times {2^p} +
{(an)^p} + {\left[ {\left( {1 - a} \right)n}
 \right]^p}
+ \left( {an - 3} \right) + \left[ {\left( {1 - a} \right)n - 1} \right]
{\left[ {\left( {1 - a} \right)n - 1} \right]^p} = {\left( {\frac{1}{2}}
\right)^{p + 1}}{n^{p + 1}} + o\left( {{n^{p + 1}}} \right) <
{e_p}\left( {{T_2}\left( n \right)} \right)$, a contradiction.

{\bf Case 4.} There exists a vertex $v\in A\setminus (A_1\cup
\{u\})$ such that $d_G(v)> d_{G'}(v)$.

Let $A\setminus (A_1\cup \{u\})=\{v_1,v_2\}$.  Without loss of
generality, assume that $d_G(v_1)=2+x$,  $d_G(v_2)=2+y$. Suppose
that $w\in B$ is adjacent to $v_1$, from Observation 2, $w$ can not
be adjacent to any vertices in $A_1$, and to avoid $5$-cycles, the
neighbors of $w$ in $B$ can not be adjacent to any vertices in
$A_1$. Just similar to Case 3, we can derive that there are two
vertices $w'$, $w''$ in $B$, such that all neighbors of $v_1$ in $B$
is adjacent to $w'$,  and all neighbors of $v_2$ in $B$ is adjacent
to $w''$,  the set of remaining vertices in $B$ and $\{w', w''\}$
form a complete bipartite graph. Note that $v_1$ and $v_2$ have no
common neighbors in $B$ in order to avoid $5$-cycles and maximize
the value of $e_p(G)$. If either $x$ or $y$ is zero, then $w'=w''$.
So, if $x\geq 1$, $y\geq 1$, then,
\[{e_p}(G)={\left( {2 + x} \right)^p} + {\left( {2 + y}
\right)^p} + {\left( {an} \right)^p} + (x + y) \cdot {2^p} +
\left( {an - 2} \right){\left[ {\left( {1 - a} \right)n - 2 - x - y}
\right]^p}\]
\begin{equation}\label{eq2}
+ \left[ {\left( {1 - a} \right)n - 3 - x - y} \right]
{\left( {an} \right)^p} + {\left[ {\left( {1 - a} \right)n - 3 - y}
\right]^p} + {\left[ {\left( {1 - a} \right)n - 3 - x} \right]^p}.
\end{equation}
Suppose either $x$ or $y$ is zero, by symmetry, we need only
consider the case when $y=0$ and $x\geq 1$. In such case, we have
\[{e_p}\left( G \right) = x \cdot {2^p} + {2^p} + {\left( {2 + x}
\right)^p} + {\left( {an} \right)^p} + \left( {an - 2} \right)
{\left[ {\left( {1 - a} \right)n - 1 - x} \right]^p}\]
\begin{equation}\label{eq3}
+\left[ {\left( {1 - a} \right)n - 2 - x} \right]{\left( {an - 1}
\right)^p} + {\left[ {\left( {1 - a} \right)n - 2} \right]^p}.
\end{equation}

In equation (\ref{eq2}), if $x$ or $y$ is $O(n)$, then the
coefficient of $n^{p+1}$ is strictly less than $a{\left( {1{\rm{ -
}}a} \right)^p}{\rm{ + }}{a^p}\left( {1{\rm{ - }}a} \right)$. Since
the coefficient of $n^{p+1}$ in $e_p(G')$ is $a{\left( {1{\rm{ -
}}a} \right)^p}{\rm{ + }}{a^p}\left( {1{\rm{ - }}a} \right)$, we
have $e_p(G)< e_p(G')$, which contradicts to our assumption. Hence,
$x$ and $y$ are both $o(n)$, and ${\left( {2 + x} \right)^p} +
{\left( {2 + y}\right)^p}$ has no contribution to the coefficient of
$n^p$. Thus,  the coefficient of $n^p$ in $e_p(G)$ is
\begin{eqnarray*}
\null &\null & a^p - 2{\left( {1 - a} \right)^p} - pa\left( {2 + x + y}
\right){\left( {1 - a} \right)^{p - 1}} - {a^p}\left( {3 + x + y}
\right) + 2{\left( {1 - a} \right)^p}\\
\null  &=&   - pa\left( {2 + x + y} \right){\left( {1 - a}
\right)^{p - 1}}-{a^p}\left( {2 + x + y} \right).
\end{eqnarray*}
From the expression of $e_p(G^\ast)$, the coefficient of $n^p$ in $e_p(G^\ast)$
is $ - 2pa{\left( {1 - a} \right)^{p - 1}} - 2{a^p}$, which is larger than
$- pa\left( {2 + x + y}
\right){\left( {1 - a} \right)^{p - 1}}
- {a^p}\left( {2 + x + y} \right)$. Similarly, when $y=0$, we can deduce that $x$ is $o(n)$.
With some calculations, one can see
that the coefficient of $n^p$ in (\ref{eq3}) is less than that in $e_p(G^\ast)$.
Hence, $e_p(G)< e_p(G^\ast)$
for sufficiently large $n$, i.e.,
$G$ can not be the extremal graph, a contradiction.

Combining all cases above, we have proved this claim.
\end{proof}

In the sequel, we will prove that the extremal graph described in
Case 2 will always have a smaller value of $e_p(\cdot)$ than the
extremal graph in Case 1. Let $G_1$ and $G_2$ be the extremal graph
in Case 1 and Case 2, respectively. So we have $e_p(G_2)=\max\{
e_p(G'),e_p(G^\ast)\}$. It is easy to get that, the coefficient of
$n^{p+1}$ in the expression of $e_p(G_2)$ is $a{\left( {1{\rm{ -
}}a} \right)^p}{\rm{ + }}{a^p}\left( {1{\rm{ - }}a} \right)$, which
is equal to that of $e_p(G_1)$. The coefficient of $n^p$ in the
expression of $e_p(G')$ is
\begin{eqnarray*}
&\null& {a^p} - 2p\left( {1 - a} \right){a^{p - 1}} - 2{\left( {1 - a} \right)^p} - {a^p}
= {\rm{ - }}2p\left( {1{\rm{ - }}a} \right){a^{p{\rm{ - }}1}}{\rm{ - }}2{\left( {1{\rm{ - }}a} \right)^p} < 0.
\end{eqnarray*}
And the coefficient of $n^p$ in the
expression of $e_p(G^\ast)$ is $- 2{a^p} -
2pa{\left( {1 - a} \right)^{p - 1}}< 0$.

Therefore, for sufficiently large $n$, $e_p(G_2)<\left[a{\left( {1{\rm{ -
}}a} \right)^p}{\rm{ + }}{a^p}\left( {1{\rm{ - }}a} \right)\right]n^{p+1}$,
i.e., $e_p(G_2)<e_p(G_1)$.

In conclusion, if $ex_p(n, C_5)=e_p(G)$ for some $C_5$-free graph
$G$ of order $n$, then $G$ is isomorphic to $G_1$. Hence $G$ is a
complete bipartite graph. Moreover, the size of one class is
$cn+o(n)$ and the other is $(1-c)n+o(n)$, where $c$ maximizes the
function $f(x)=x{\left( {1{\rm{ - }}x} \right)^p}{\rm{ +
}}{x^p}\left( {1{\rm{ - }}x} \right)$ in $\left[ {\frac{1}{2}{\rm{ ,
}}\ 1} \right]$.\qed

\end{document}